\newtheorem{thm}{Theorem}[section]
\newtheorem{proposition}[thm]{Proposition}
\newtheorem{definition}[thm]{Definition}
\newtheorem{remark}[thm]{Remark}
\title{Energy of codes with forbidden distances in 48 dimensions}
\author[P. G. Boyvalenkov]{P. G. Boyvalenkov  } 
\address{Institute of Mathematics and Informatics, Bulgarian Academy of Sciences,
8 G Bonchev Str., 1113  Sofia, Bulgaria}
\email{peter@math.bas.bg}
\author[P. D. Dragnev]{P. D. Dragnev \\ \vskip 2mm
Dedicated to Edward B. Saff on the occasion of his 80th Birthday}
\address{Department of Mathematical Sciences, Purdue University 
Fort Wayne, IN 46805, USA }
\email{dragnevp@pfw.edu}
\date{August 2024}
\begin{document}

\begin{abstract} 
We prove the universal optimality of four remarkable spherical 11-designs
in 48 dimensions either among all antipodal codes, or all spherical 3-designs, whose inner-products avoid the set $T_1=(-1/3,-1/6) \cup (1/6,1/3)$. We also prove the universal optimality of these configurations among all codes whose distance-avoiding set is $T_2=(-1/2,-1/3) \cup (1/3,1/2)$. 
\end{abstract}

\maketitle

\section{Introduction}

Let $\mathbb{S}^{n-1}=\{x=(x_1,\ldots,x_n): x_1^2+\cdots+x_n^2=1\}$ be the 
unit Euclidean sphere in $n$ dimensions. A finite nonempty set $C \subset \mathbb{S}^{n-1}$
is a {\em spherical $(n,N,s)$ code} if $|C|=N \geq 2$ and $s=s(C)=\max \{ x \cdot y : x,y \in C,
x \neq y\}$; $s(C)$ is called {\em maximal cosine} of $C$. Denote by 
\[ I (C):=\{ x \cdot y : x,y \in C, x \neq y\} \]
the set of all inner products of distinct points of $C$. Note that $s(C) = \max I(C)$.

\begin{definition}
Let $T \subset [-1,1)$. A spherical code $C \subset \mathbb{S}^{n-1}$  
is called $T$-avoiding if $I(C) \cap T = \phi$. 
\end{definition}

As is often the case in the study of good spherical codes, the notion of 
{\em spherical designs} plays a significant role. Spherical designs were 
introduced in 1977 by Delsarte, Goethals, and Seidel with several equivalent 
definitions, one of them being the following. 

\begin{definition}
A spherical $\tau$-design is a spherical 
code $C \subset \mathbb{S}^{n-1}$ such that
\[ \int_{\mathbb{S}^{n-1}} p(x) d\sigma_n(x)= \frac{1}{|C|} \sum_{x \in C} p(x) \]
holds for all polynomials $p(x) = p(x_1,x_2,\ldots,x_n)$ of total degree at most $\tau$. Here $\sigma_n $ is the normalized (unit) Lebesgue surface measure. 
\end{definition}

Given a function $h:[-1,1] \to (-\infty,+\infty]$, continuous on $[-1,1)$, we consider the {\em discrete $h$-energy} of $C$
$$ \mathcal{E}^h(C):= \frac{1}{N} \cdot \sum_{x,y \in C, x \neq y} h(x \cdot y). $$
For fixed $T \subset [-1,1)$ we define corresponding minimum and maximum quantities
$$ \mathcal{P}_{n,h}(N,T):=\min \{  \mathcal{E}^h(C): C \subset \mathbb{S}^{n-1}, \ |C|=N, \ C \mbox{ is $T$-avoiding}\},$$
$$ \mathcal{P}_{n,h}(\tau,N,T):=\min \{  \mathcal{E}^h(C): C \subset \mathbb{S}^{n-1}, \ |C|=N, \ C \mbox{ is a $T$-avoiding spherical 
$\tau$-design}\},$$
$$ \mathcal{Q}^{n,h}(N,T):=\max \{  \mathcal{E}^h(C): C \subset \mathbb{S}^{n-1}, \ |C|=N, \ C \mbox{ is $T$-avoiding}\},$$
$$ \mathcal{Q}^{n,h}(\tau,N,T):=\max \{  \mathcal{E}^h(C): C \subset \mathbb{S}^{n-1}, \ |C|=N, \ C \mbox{ is a $T$-avoiding spherical $\tau$-design}\},$$
where in the latter two cases we additionally assume that $h$ is continuous and finite at $1$. 

In this paper we consider $T$-avoiding spherical codes on $\mathbb{S}^{47}$ for the special choice of
\begin{equation} \label{AvoidingSets} T_1:=\left(-\frac{1}{3},-\frac{1}{6}\right) \cup \left(\frac{1}{6},\frac{1}{3}\right), \quad T_2:=\left(-\frac{1}{2},-\frac{1}{3}\right) \cup \left(\frac{1}{3},\frac{1}{2}\right). \end{equation}
We prove that among all codes on $\mathbb{S}^{47}$ with cardinality $52\,416\,000$, the four exceptional codes, namely the sets of the minimal vectors (normalized to be unit) of the four known 
even unimodular extremal lattices $P_{48p}, P_{48q}, P_{48m}$, and $P_{48n}$ in $\mathbb{R}^{48}$
(cf. \cite{CS} for the first three and \cite{nebe2014fourth} for the fourth) have optimal $h$-energy for every absolutely monotone $h$ 
among all $T_2$-avoiding codes and among all $T_1$-avoiding codes that are antipodal or spherical 3-designs. 
Moreover, a code $C$ in either class that attains our universal lower bound is an 11-design with prescribed inner product set $I(C)$ and frequency distribution of those inner products. Such designs necessarily have  $52\,416\,000$ points and this is the minimum possible their cardinality by \cite{BC2024}. 

Our main tool is the linear programming (LP). We derive LP bounds for the 
quantities $\mathcal{P}_{48,h}(N,T_i)$, $i=1,2$, and $\mathcal{Q}_{48,h}(11,N,T_1)$, which turn out to coincide with the actual $h$-energy of the four 
11-designs under consideration. 

In \cite{GV2024} the authors consider lattices in $\mathbb{R}^{48}$ 
which avoid certain distances which are, after reformulating into inner products, leading exactly to the set $T_1$. The main result in \cite{GV2024}
is that among these, say $T_1$-avoiding lattices, 
the four exceptional lattices mentioned above have maximal possible density. This is a result 
in the spirit of the celebrated proofs of packing optimality of the $E_8$ \cite{V}
and the Leech \cite{CKMRV} lattices. The optimality of the cardinality of the kissing 
configurations of these lattices among the $T_1$-avoiding $(48,N,1/2)$ codes
and $T_1$-avoiding and $T_2$-avoiding spherical 11-designs was established in \cite{BC2024}
(see Theorems \ref{kiss-48} and \ref{11des-48} below), resembling the Levenshtein's finding \cite{Lev79} of the kissing numbers in dimensions 8 and 24 (see also \cite{OS79}). 

We present general linear programming bound for $T$-avoiding spherical 
codes and designs in Section 2. In Section 3 the four remarkable spherical
11-designs are presented with some of their properties including their
(common) distance distribution. The main results are presented in Section 4.
In Theorem \ref{ulb-48} we prove the universal optimality of the four
codes among all $T_1$-avoiding antipodal codes or 3-designs. In Theorem 
\ref{ulb-48-2} the universal optimality is among all $T_2$-avoiding codes.
We also discuss the optimality of these codes with respect to linear programming upper bounds. 

\section{General linear programming bounds for the $h$-energy of $T$-avoiding codes and designs}

\subsection{Some preliminaries}
We derive (more or less folklore) general LP bounds on the minimum and maximum possible $h$-energy of $T$-avoiding
spherical codes and designs. The two main theorems below are true for general $T \subset [-1,1)$. We follow the LP 
framework developed in \cite{BDHSS-DRNA} for spherical designs and in \cite{BDHSS-CA} for general spherical codes.

With each real one-variable polynomial $f$ we associate its (unique) expansion in terms of the {\em Gegenbauer polynomials}\footnote{We use the version of the Gegenbauer polynomials $\{P_i^{(n)}(t)\}_{i=0}^{\infty}$ in which they are orthogonal with respect to the weight $w(t):=(1-t^2)^{(n-3)/2}$ and are normalized by $P_i^{(n)}(1)=1$.} $\{P_i^{(n)}(t)\}_{i=0}^{\infty}$,
\[ f(t)=\sum_{i=0}^{\deg{f}} f_iP_i^{(n)}(t), \] 
where, with $w(t)=(1-t^2)^{(n-3)/2}$, the coefficients $f_i$, $i=0,1,\ldots,\deg{f}$, are uniquely determined by
\begin{equation} \label{eq:orthorep}
f_i = \int_{-1}^{1} f(t) P_i^{(n)}(t) \, w(t)\, dt {\Big /} \int_{-1}^{1} \left(P_i^{(n)}(t) \right)^2 w(t)\, dt.
\end{equation}
In particular, 
\[ f_0=c_n \int_{-1}^{1} f(t) \cdot w(t) dt,\]
where $c_n:= \Gamma(n/2)/\sqrt{\pi}\Gamma((n-1)/2)$.

Note that the Gegenbauer polynomials are positive definite (and so are their nonnegative linear combinations). This implies, in particular, that
\[
M_i (C) := \sum_{x,y \in C} P_i^{(n)} (x \cdot y) \geq 0
\]
for every code $C \subset \mathbb{S}^{n-1}$ and every positive integer $i$ (cf. \cite{Sch42}, 
see also \cite[Chapter 5]{BHS} for comprehensive discussion). 
We call $M_i(C)$ moments of $C$. The moments serve for an equivalent (and very useful) 
definition of spherical designs. 

\begin{definition} \label{def-des} 
Let $\tau$ be a positive integer. A spherical code 
$C \subset \mathbb{S}^{n-1}$ is a spherical 
$\tau$-design if and only if $M_1(C)=\cdots=M_\tau(C)=0$. 
\end{definition}

Note also that $M_i(C)=0$ for all odd $i$ if and only if the code $C$ is antipodal, i.e. $C=-C$.  

\subsection{Lower LP bounds}

\begin{definition} \label{LowerClassCodes} 
Let $h$ be a potential function. Denote by $\mathcal{L}(n,h;T)$ 
the class of {\em lower admissible polynomials} $f(t)$ such that 
\begin{itemize}
\item[(A1)] $f(t) \leq h(t)$ for every $t \in [-1,1] \setminus T$;
\item[(A2)] $f_i \geq 0$ for all $i \geq 1$. 
\end{itemize}
\end{definition}

\begin{definition} \label{LowerClassDesigns} 
Let $\tau$ be a positive integer and $h$ be a potential function. Denote by $\mathcal{L}(n,\tau,h;T)$ 
the class of {\em lower $\tau$-admissible polynomials} $f(t)$ such that 
\begin{itemize}
\item[(A1)] $f(t) \leq h(t)$ for every $t \in [-1,1] \setminus T$,
\item[(A2)$^\prime$] $\deg(f) \leq \tau$.
\end{itemize}
\end{definition}

Utilizing these definitions one derives the following Delsarte-Yudin 
type (cf. \cite{DGS,Y}, Chapter 5 in \cite{BHS}) LP lower bounds on the minimum possible energy of any $T$-avoiding spherical codes and designs on $\mathbb{S}^{n-1}$.

\begin{proposition}[Lower LP bound]  \label{prop1} 
Let $h(t)$ be a potential function and $f \in {\mathcal L}(n,h;T)$ (respectively,  $f \in {\mathcal L}(n,\tau,h;T)$). Then
\begin{equation}\label{lower-bound}
 \mathcal{E}^h(C) \geq f_0N-f(1)
\end{equation}
for every $T$-avoiding spherical code (respectively, $\tau$-design)
$C \subset \mathbb{S}^{n-1}$. Consequently,
\begin{equation}\label{DY_LB}
\mathcal{P}_{n,h}(N,T) \geq \max_{f\in {\mathcal L}(n,h;T)} 
\left\{ f_0N-f(1) \right\},
\end{equation}
and
\begin{equation}\label{DY_LB_des}
\mathcal{P}_{n,h}(\tau,N,T) \geq \max_{f\in {\mathcal L}(n,\tau,h;T)} 
\left\{ f_0N-f(1) \right\},
\end{equation}
respectively.

If the bound \eqref{DY_LB} is attained by some polynomial $f \in {\mathcal L}(n,h;T)$ and some code $C \subset \mathbb{S}^{n-1}$, then $f_iM_i(C)=0$ for each $i \geq 1$ and $I(C)$ is a subset of the set of zeros of $f-h$. 
If the bound \eqref{DY_LB_des} is attained by some polynomial $f \in {\mathcal L}(n,\tau,h;T)$ and some $\tau$-design $C \subset \mathbb{S}^{n-1}$, then $I(C)$ is a subset of the set of zeros of $f-h$. 
\end{proposition}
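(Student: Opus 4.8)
The plan is to follow the standard Delsarte–Yudin linear programming argument, adapted to the $T$-avoiding setting, where the admissibility condition (A1) is relaxed to hold only outside the forbidden set $T$. First I would fix a $T$-avoiding code $C$ with $|C|=N$ and a polynomial $f\in\mathcal{L}(n,h;T)$, and expand $f$ in Gegenbauer polynomials as $f(t)=\sum_{i=0}^{\deg f}f_iP_i^{(n)}(t)$. The key identity is obtained by summing $f(x\cdot y)$ over all ordered pairs $(x,y)\in C\times C$: splitting off the diagonal terms (each contributing $f(1)$, of which there are $N$) and using the definition of the moments $M_i(C)=\sum_{x,y\in C}P_i^{(n)}(x\cdot y)$, one gets
\[
\sum_{\substack{x,y\in C\\ x\neq y}} f(x\cdot y) = \sum_{i=0}^{\deg f} f_i M_i(C) - N f(1) = f_0 N^2 + \sum_{i\geq 1} f_i M_i(C) - N f(1),
\]
since $M_0(C)=N^2$ and $P_0^{(n)}\equiv 1$.

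Next I would invoke positive definiteness of the Gegenbauer polynomials (Schoenberg), which gives $M_i(C)\geq 0$ for all $i\geq 1$; combined with (A2), namely $f_i\geq 0$ for $i\geq 1$, every term in the sum $\sum_{i\geq 1}f_iM_i(C)$ is nonnegative. Hence
\[
\sum_{\substack{x,y\in C\\ x\neq y}} f(x\cdot y) \geq f_0 N^2 - N f(1).
\]
On the other hand, since $C$ is $T$-avoiding, every inner product $x\cdot y$ with $x\neq y$ lies in $[-1,1]\setminus T$, so (A1) gives $f(x\cdot y)\leq h(x\cdot y)$ for each such pair, whence $\sum_{x\neq y}h(x\cdot y)\geq \sum_{x\neq y}f(x\cdot y)$. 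Chaining the two inequalities and dividing by $N$ yields $\mathcal{E}^h(C)=\frac1N\sum_{x\neq y}h(x\cdot y)\geq f_0 N - f(1)$, which is \eqref{lower-bound}. Taking the supremum over admissible $f$ gives \eqref{DY_LB}, and \eqref{DY_LB_des} follows identically — the only change being that in the design case condition (A2) is replaced by $\deg(f)\leq\tau$, and then $M_i(C)=0$ for $1\leq i\leq\tau$ by Definition \ref{def-des}, so the entire sum $\sum_{i\geq 1}f_iM_i(C)$ vanishes regardless of the signs of $f_i$.

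For the equality characterization, suppose a pair $(f,C)$ attains \eqref{DY_LB}. Then both inequalities used above must be equalities. Equality in the first forces $f_iM_i(C)=0$ for every $i\geq 1$ (a sum of nonnegative terms vanishes iff each term does), and equality in the second forces $f(x\cdot y)=h(x\cdot y)$ for every pair of distinct points, i.e. every element of $I(C)$ is a zero of $f-h$; so $I(C)$ is contained in the zero set of $f-h$. In the design case the moment condition is automatic, so only the statement about $I(C)$ and the zeros of $f-h$ survives, exactly as claimed. I do not anticipate a genuine obstacle here: the argument is routine once one has Schoenberg's positive-definiteness of $\{P_i^{(n)}\}$ and the moment reformulation of designs (Definition \ref{def-des}), both quoted in the excerpt. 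The only point requiring a little care is bookkeeping the diagonal versus off-diagonal terms and correctly isolating the $i=0$ term $f_0N^2$, together with the harmless observation that in the design case one needs $\deg f\le\tau$ precisely so that no moment $M_i(C)$ with $i>\tau$ appears.
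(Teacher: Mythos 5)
Your proof is correct and follows essentially the same route as the paper: the identity you derive by summing $f(x\cdot y)$ over ordered pairs is exactly the paper's key identity \eqref{main-id} (which the paper simply cites rather than rederives), and the rest — positive definiteness of the Gegenbauer polynomials, conditions (A1)/(A2) or the degree bound with $M_i(C)=0$ for designs, and the equality analysis — matches the paper's argument.
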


\begin{proof}
This is immediate from the identity
\begin{equation} \label{main-id}
f(1)N+\sum_{x,y \in C, x \neq y} f(x \cdot y) = f_0N^2+\sum_{i=1}^{\deg(f)} f_iM_i(C)
\end{equation}
(see, e.g. Equation (1.9) in \cite{FL}; also \cite[Corollary 3.8]{DGS}), Definition \ref{LowerClassCodes}
(Definition \ref{LowerClassDesigns}, respectively), and the inequalities $M_i(C) \geq 0$. Indeed, the LHS of \eqref{main-id} is at most $N(f(1)+\mathcal{E}^h(C))$, in both cases because of the condition (A1).
The RHS is at least $f_0N^2$ because of (A2) in the case of Definition \ref{LowerClassCodes} and equal to $f_0N^2$ in the case of
Definition \ref{LowerClassDesigns}.  
\end{proof}

In fact, we will use two modifications of Proposition \ref{prop1} which 
impose requirements only either on $f_i$ for odd $i$ or on $f_i$ for $i \geq 4$. 

\subsection{Upper LP bounds}

Similarly, we define the upper admissible polynomials and derive the corresponding upper linear programming bounds. 

\begin{definition} \label{UpperClassCodes} 
Let $h$ be a potential function. Denote by $\mathcal{U}(n,h;T)$ 
the class of {\em upper admissible polynomials} $g(t)$ such that 
\begin{itemize}
\item[(B1)] $g(t) \geq h(t)$ for every $t \in [-1,1] \setminus T$;
\item[(B2)] $g_i \leq 0$ for all $i \geq 1$. 
\end{itemize}
\end{definition}

\begin{definition} \label{UpperClassDesigns} 
Let $\tau$ be a positive integer and $h$ be a potential function. Denote by $\mathcal{U}(n,\tau,h;T)$ 
the class of {\em upper $\tau$-admissible polynomials} $g(t)$ such that 
\begin{itemize}
\item[(B1)] $g(t) \geq h(t)$ for every $t \in [-1,1] \setminus T$,
\item[(B2)$^\prime$] $\deg(g) \leq \tau$.
\end{itemize}
\end{definition}

The Delsarte-Yudin type upper bounds follow.

\begin{proposition}[Upper LP bound]  \label{prop2} 
Let $h(t)$ be a potential function which is continuous and finite at $1$ and $g \in {\mathcal U}(n,h;T)$ (respectively,  $g \in {\mathcal U}(n,\tau,h;T)$). 
Then
\begin{equation}\label{upper-bound}
 \mathcal{E}^h(C) \leq g_0N-g(1)
\end{equation}
for every $T$-avoiding spherical code (respectively, $\tau$-design)
$C \subset \mathbb{S}^{n-1}$. Consequently,
\begin{equation}\label{DY_UB}
\mathcal{Q}_{n,h}(N,T) \leq \min_{g \in {\mathcal U}(n,h;T)} 
\left\{ g_0N-g(1) \right\},
\end{equation}
respectively,
\begin{equation}\label{DY_UB_des}
\mathcal{Q}_{n,h}(\tau,N,T) \leq \min_{g \in {\mathcal U}(n,\tau,h;T)} 
\left\{ g_0N-g(1) \right\}.
\end{equation}

If the bound \eqref{DY_UB} is attained by some polynomial $g \in {\mathcal U}(n,h;T)$ and some code $C \subset \mathbb{S}^{n-1}$, then $g_iM_i(C)=0$ for each $i \geq 1$ and $I(C)$ is a subset 
of the set of zeros of $g-h$. 
If the bound \eqref{DY_UB_des} is attained by some polynomial $g \in {\mathcal U}(n,\tau,h;T)$ and some $\tau$-design $C \subset \mathbb{S}^{n-1}$, then $I(C)$ is a subset 
of the set of zeros of $g-h$. 
\end{proposition}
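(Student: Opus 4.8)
The plan is to mirror the proof of Proposition \ref{prop1} almost verbatim, reversing every inequality. The single algebraic engine is again the identity \eqref{main-id},
\[ g(1)N+\sum_{x,y \in C, x \neq y} g(x \cdot y) = g_0N^2+\sum_{i=1}^{\deg(g)} g_iM_i(C), \]
which holds for \emph{any} polynomial $g$ (it is just the Gegenbauer expansion of $g$ paired against the code, and requires no admissibility hypothesis). So I would first record this identity, then estimate the two sides in the opposite direction from before.

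First I would bound the left-hand side from below. For each ordered pair $x\neq y$ in a $T$-avoiding code $C$ we have $x\cdot y\in [-1,1]\setminus T$, so condition (B1) gives $g(x\cdot y)\ge h(x\cdot y)$; summing over all such pairs and adding $g(1)N$ yields that the left side is at least $g(1)N+\sum_{x\neq y}h(x\cdot y)=g(1)N+N\,\mathcal{E}^h(C)=N\bigl(g(1)+\mathcal{E}^h(C)\bigr)$. Here it is essential that $h$ be finite at $1$ and that $g(1)$ make sense, which is why the hypothesis ``$h$ continuous and finite at $1$'' is imposed in the statement — exactly as flagged in the definitions of $\mathcal{Q}$. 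Next I would bound the right-hand side from above. In the case $g\in\mathcal{U}(n,h;T)$, condition (B2) says $g_i\le 0$ for all $i\ge 1$, while the moments satisfy $M_i(C)\ge 0$ by Schoenberg positive-definiteness; hence $\sum_{i\ge1} g_iM_i(C)\le 0$ and the right side is at most $g_0N^2$. In the design case $g\in\mathcal{U}(n,\tau,h;T)$ we instead use $\deg(g)\le\tau$ together with Definition \ref{def-des}, which forces $M_1(C)=\cdots=M_\tau(C)=0$, so the entire sum $\sum_{i\ge1}g_iM_i(C)$ vanishes and the right side equals $g_0N^2$ exactly. Combining the two estimates gives $N\bigl(g(1)+\mathcal{E}^h(C)\bigr)\le g_0N^2$, i.e. $\mathcal{E}^h(C)\le g_0N-g(1)$, which is \eqref{upper-bound}; taking the minimum over admissible $g$ produces \eqref{DY_UB} and \eqref{DY_UB_des}.

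For the equality/characterization clause I would trace when each inequality above is tight. Equality in \eqref{DY_UB} for a particular $g$ and $C$ forces equality in both the lower bound on the left side and the upper bound on the right side. Tightness on the left side means $g(x\cdot y)=h(x\cdot y)$ for every ordered pair of distinct points of $C$, i.e. every value in $I(C)$ is a root of $g-h$; tightness on the right side (in the non-design case) means $\sum_{i\ge1}g_iM_i(C)=0$, and since each summand $g_iM_i(C)$ is $\le 0$ this forces $g_iM_i(C)=0$ for every $i\ge1$. In the design case the right-side identity is automatic, so only the root condition $I(C)\subseteq\{t: g(t)=h(t)\}$ survives, which is precisely what is claimed.

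There is essentially no obstacle here: the proposition is the formal dual of Proposition \ref{prop1}, and the only points demanding the least care are the sign bookkeeping (every ``$\le$'' is now a ``$\ge$'' and $f_i\ge0$ becomes $g_i\le0$) and the explicit invocation of finiteness of $h$ at $1$, needed so that the term $g(1)N$ and the energy are both well defined when we split the left-hand side of \eqref{main-id}. I would therefore present the argument in one short paragraph paralleling the proof of Proposition \ref{prop1}, simply noting ``reverse all inequalities in the proof of Proposition \ref{prop1}, using (B1), (B2) (respectively (B2)$^\prime$ and Definition \ref{def-des}), and $M_i(C)\ge0$,'' and then append the two-sentence tightness analysis above.
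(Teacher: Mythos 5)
Your proof is correct and follows exactly the route the paper takes: the paper's own proof is a one-line remark that the argument of Proposition \ref{prop1} carries over using the identity \eqref{main-id}, conditions (B1)--(B2) (respectively (B2)$^\prime$ with Definition \ref{def-des}), and $M_i(C)\geq 0$, which is precisely what you spell out, including the correct tightness analysis for the equality clauses.
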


\begin{proof}
Similarly to Proposition \ref{prop1} this follows from the identity 
\eqref{main-id}, Definition \ref{UpperClassCodes}
(Definition \ref{UpperClassDesigns}, respectively), and the inequalities $M_i(C) \geq 0$. 
\end{proof}

\section{A class of remarkable  spherical 11-designs on $\mathbb{S}^{47}$} \label{S3}

Motivated by \cite{GV2024,BC2024}, we will apply the framework from the previous section in the particular 
case $n=48$, $N=52\,416\,000$, and avoiding sets $T_i$, $i=1,2$ (see \eqref{AvoidingSets}). In addition, when spherical designs
are in consideration, we shall assume $\tau=11$ (in fact, these are spherical $11\sfrac{1}{2}$-designs, see Venkov \cite{Ven}; in other words, 
all they have zeroth fourteenth moment $M_{14}(.)=0$. 
In what follows, these values of $n$, $\tau$, $N$, and $T_i$, $i=1,2$, will be fixed.

We recall the notion of distance distribution of a (spherical) code. 
For any $x \in C$ and $t \in I(C)$, one denotes by
\[ A_t(x):=|\{y \in C : x \cdot y  = t\}|, \]
the number of the points of $C$ with inner product $t$ with
$x$. The system of nonnegative integers
$$(A_t(x): t \in I(C))$$ is called \textit{distance distribution
of $C$ with respect to $x$}. If all $A_t(x)$ do not depend on the 
choice of $x$, the code $C$ is called {\it distance invariant} (cf. 
\cite[Definition 7.2]{DGS}) and one omits $x$ in the notation. We remark that
$A_{-1}=1$ means that $C$ is antipodal. In antipodal codes
one has $A_t(x)=A_{-t}(x)$ for every $t \in I(C) \setminus \{-1\}$ and every $x \in C$.

There are at least four non-isomorphic spherical 11-designs on $\mathbb{S}^{47}$, formed as the sets of minimal vectors of the even unimodular extremal lattices $P_{48p}, P_{48q}, P_{48m}$, and $P_{48n}$
in $\mathbb{R}^{48}$, normalized on the unit sphere. All these codes are distance invariant with 8 distinct distances (cf. \cite[Theorem 7.4]{DGS}) and all they have the same distance distribution (cf. the calculation in \cite{BC2024} via equation (1.10) from \cite{FL}):
\begin{align}
\begin{split}~\label{eq:distdist}
A_{-1} &= 1, \\
A_{1/2}=A_{-1/2} &= 36\,848, \\
A_{1/3}=A_{-1/3} &= 1\,678\,887, \\
A_{1/6}=A_{-1/6} &= 12\,608\,784, \\
A_{0}  &=  23\,766\,960.\\
\end{split}
\end{align}

\begin{remark}
    We note that the set of $23\,766\,960$ points from 
    the distance distribution \eqref{eq:distdist} defines 47-dimensional kissing configuration which is far superior than what is given (9\,741\,412) in the webpage of Henry Cohn \cite{Cohn-page}.
\end{remark}

The above information is enough for calculation of the $h$-energy of our
target codes. Thus, the $h$-energy of each of the above four codes 
(say, $C$) is the same, given by 
\begin{eqnarray*} \label{h-energy-48}
\mathcal{E}^h(C) &=& 36848\left(h\left(-\frac{1}{2}\right)+h\left(\frac{1}{2}\right)\right) + 1678887\left(h\left(-\frac{1}{3}\right)+h\left(\frac{1}{3}\right)\right) \nonumber \\ 
&& \, +12608784\left(h\left(-\frac{1}{6}\right)+h\left(\frac{1}{6}\right)\right)+23766960h(0)+h(-1).
\end{eqnarray*}

It was shown in \cite{BC2024} that the four codes above have optimal cardinality among the $T_1$-avoiding antipodal codes with maximal cosine $1/2$
and among the $T_1$-avoiding spherical 3-designs on $\mathbb{S}^{47}$.

\begin{thm}[Theorem 5.1 in \cite{BC2024}] \label{kiss-48}
Let $C \subset \mathbb{S}^{47}$ be a $T_1$-avoiding spherical $(48,N,1/2)$ code which is either antipodal or a spherical 3-design.
Then $N \leq 52\,416\,000$. If the equality is attained, then $C$ is an antipodal spherical 11-design and, moreover, it is distance invariant and 
its distance distribution is as given in~\eqref{eq:distdist}.
\end{thm}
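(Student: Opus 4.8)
## Proof Proposal for Theorem~\ref{kiss-48}

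The plan is to run a linear programming argument in the spirit of Proposition~\ref{prop1}, but tailored to the two structural hypotheses (antipodal, or 3-design) so that the design/antipodality moments can be discarded from the Delsarte-Yudin identity~\eqref{main-id}. First I would look for a single polynomial $f(t)$ of modest degree, built to vanish (together with $f'$, to second order) at the points $t\in\{-1/2,-1/6,0,1/6,1/3\}$ that actually occur in~\eqref{eq:distdist}, and to satisfy $f(t)\le 0$ on $[-1,1]\setminus(T_1\cup\{1/2\})$ (here $h\equiv 0$, so we are really doing the classical code-size bound: take $h$ to be the potential that is $0$ on $[-1,1)$, and recall the bound $N\le f(1)/f_0$ when $f_0>0$). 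The forbidden window $T_1=(-1/3,-1/6)\cup(1/6,1/3)$ is precisely what creates room: $f$ is allowed to be positive there, which is what lets a degree-$11$ (or $11\tfrac12$) polynomial with the prescribed double zeros still stay nonpositive on the admissible set. I would write $f$ as a product of the obvious quadratic factors $(t+1/2)(t+1/6)t\cdot$-type double factors times $(1/2-t)$ and a correcting low-degree factor, then pin down the free coefficients by the interpolation conditions at $t=1/2$ (simple zero of $f-h$ there since $1/2\in I(C)$) and $t=-1/3,1/3$ (the endpoints of $T_1$, where we only need $f\le 0$, i.e. $f$ should touch $0$ from below or be negative).

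The second step is to verify the sign conditions on the Gegenbauer coefficients $f_i$ — but only the reduced set the hypothesis allows. If $C$ is antipodal then $M_i(C)=0$ for all odd $i$, so I only need $f_i\ge 0$ for \emph{even} $i\ge 2$; if $C$ is a spherical $3$-design then $M_1(C)=M_2(C)=M_3(C)=0$, so I only need $f_i\ge 0$ for $i\ge 4$. In both cases $f_0>0$ must be checked (this is where the count $52\,416\,000$ comes out: $f(1)/f_0$ should evaluate to exactly that). These are the "two modifications of Proposition~\ref{prop1}" promised in the text. Computing the $f_i$ from~\eqref{eq:orthorep} in dimension $n=48$ is a finite rational computation; I would expand $f$ in the $P_i^{(48)}$ basis and read off the signs, confirming that the only possibly-negative coefficients are exactly the ones killed by the structural hypothesis.

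For the equality analysis: if $N=52\,416\,000$ then~\eqref{main-id} forces $f_iM_i(C)=0$ for every $i$ in the relevant range and $I(C)\subseteq\{t:\ f(t)=0\}\cup\{1/2\}\cup(\text{complement constraints})$; since $f$ was constructed with zeros exactly at $\{-1/2,-1/6,0,1/6,1/3\}$ and equality at $1/2$, and since $C$ is $T_1$-avoiding, we get $I(C)\subseteq\{-1/2,-1/6,0,1/6,1/3,1/2\}$ — six inner products, hence (with $-1$ adjoined if antipodal) at most a $7$-distance set, and by the Delsarte-Goethals-Seidel bound~\cite[Theorem 7.4]{DGS} such a set in $\mathbb{S}^{47}$ of this size is distance invariant and is a spherical $\tau$-design with $\tau$ as large as the coefficient count permits. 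The antipodality in the non-antipodal (3-design) branch has to be recovered separately: a design with $I(C)$ not symmetric about $0$ would have too few distances on one side, and I would force $A_{-1}=1$ (hence $-1\in I(C)$, contradicting the six-point list) — more precisely, one shows the only way to get $f_0 N = f(1)$ with a design on this inner-product set is for the distance distribution to solve the linear moment equations $M_1=\dots=M_{11}=0$ uniquely, and that unique solution is~\eqref{eq:distdist}, which is antipodal. Then $M_{12},\dots$ being forced to zero (via $f_i M_i=0$ and the extra Venkov half-design phenomenon) upgrades $3$-design to $11$-design.

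The main obstacle I expect is not the LP duality but the \emph{construction} of the optimal polynomial $f$: one must guess the correct zero pattern (which of the eight inner products get double zeros versus simple zeros, and whether the $T_1$-endpoints $\pm1/3$ are genuine zeros of $f$ or just points where $f<0$), and then check that with $h\equiv 0$ the resulting $f$ of degree $\le 11$ really is $\le 0$ on all of $[-1,-1/2]\cup[-1/6,1/6]\cup[1/3,1/2]$ and not merely near the interpolation nodes — a global sign verification on a union of intervals. Equivalently, one needs the "right" feasible point of the dual LP, and the cleanest route is probably to take the known Levenshtein-type optimal polynomial for the $(48,N,1/2)$ antipodal problem and check that the forbidden set $T_1$ exactly absorbs its sign violation; this is the step where~\cite{GV2024,BC2024} did the heavy lifting and where I would follow their computation rather than reinvent it.
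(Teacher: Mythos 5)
First, a point of comparison: this theorem is not proved in the paper at all --- it is imported verbatim from \cite{BC2024} (Theorem 5.1 there), and the only detail of the actual proof the paper discloses is that the certifying polynomial is the one denoted $PP_{11}$ in Subsection 4.1, namely the degree-$11$ polynomial with nodes $\left(-1,-1,-\tfrac12,-\tfrac12,-\tfrac13,-\tfrac16,0,0,\tfrac16,\tfrac13,\tfrac12\right)$, whose Gegenbauer coefficients are listed there and have exactly one negative entry, $g_{3,11}$. Your overall strategy --- classical Delsarte LP with $h\equiv 0$, requiring nonnegativity only of the Gegenbauer coefficients whose moments are not annihilated by antipodality or by the $3$-design hypothesis, the bound $N\le f(1)/f_0$, and an equality analysis confining $I(C)$ to the zero set of $f$ followed by \cite[Theorem 7.4]{DGS} and the moment equations --- is indeed the strategy behind the cited result, and your identification of $M_3(C)=0$ as the moment that absorbs the sign violation is exactly right.

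However, your concrete candidate polynomial is wrong, and the error propagates into your equality analysis. For the bound to equal $52\,416\,000$ and to be attained by codes with distribution \eqref{eq:distdist}, the polynomial must vanish at \emph{every} inner product occurring there, i.e.\ at $-1$, $\pm\tfrac12$, $\pm\tfrac13$, $\pm\tfrac16$, $0$; this is also what makes the quadrature formula \eqref{quad-11} collapse to $f_0N=f(1)$. Your proposed zero pattern (double zeros at $-\tfrac12,-\tfrac16,0,\tfrac16,\tfrac13$, no zero at $-1$ or $-\tfrac13$) cannot be tight, and your conclusion that equality forces $I(C)\subseteq\{-\tfrac12,-\tfrac16,0,\tfrac16,\tfrac13,\tfrac12\}$ directly contradicts \eqref{eq:distdist}, in which $-1$ and $\pm\tfrac13$ occur (the latter with multiplicity $1\,678\,887$); your subsequent recovery of antipodality by ``contradicting the six-point list'' therefore does not work. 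The correct polynomial has double zeros at $-1,-\tfrac12,0$ and simple zeros at $\pm\tfrac13,\pm\tfrac16,\tfrac12$ (this is $PP_{11}$), it is $\le 0$ on $[-1,\tfrac12]\setminus T_1$ because the factor $\left(t+\tfrac13\right)\left(t+\tfrac16\right)\left(t-\tfrac16\right)\left(t-\tfrac13\right)$ is only negative inside $T_1$, and it gives $f(1)/f_0=\tfrac{35}{9}\cdot 13\,478\,400=52\,416\,000$; equality then forces $M_i(C)=0$ for $i=1,\dots,11$ (using $f_i>0$ for $i\neq 3$ and $M_3(C)=0$ by hypothesis), so $C$ is an $11$-design with $|I(C)|\le 8<11$, hence distance invariant, and the distribution --- including $A_{-1}=1$, i.e.\ antipodality --- is pinned down by the quadrature equations. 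Finally, the global sign verification and the construction of the optimal polynomial, which you explicitly defer to \cite{GV2024,BC2024}, are precisely the substance of the cited theorem, so as written the proposal does not stand on its own.
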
 

Also in \cite{BC2024}, the minimal cardinality of $T_1$-avoiding and $T_2$-avoiding 11-designs on $\mathbb{S}^{47}$ was established.

\begin{thm}[Theorem 6.1 and 6.2 in \cite{BC2024}] \label{11des-48}
Let $C \subset \mathbb{S}^{47}$ be a $T_i$-avoiding spherical $11$-design, with $T_i$, $i=1,2$, as in \eqref{AvoidingSets}. Then
$|C| \geq 52\,416\,000$. If the equality is attained in either case, then $C$ is a $(48,52\,416\,000,1/2)$ antipodal spherical code which is distance invariant 
with distance distribution as in~\eqref{eq:distdist}.
\end{thm}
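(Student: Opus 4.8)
The plan is to use a linear programming bound, namely the design case of Proposition~\ref{prop1} specialized to a constant potential $h\equiv 1$ (any positive constant produces the same final inequality), for which $\mathcal{E}^{h}(C)=N-1$ whenever $|C|=N$. Fix $i\in\{1,2\}$. I would take $f^{(i)}:=1-g^{(i)}$, where $g^{(i)}$ is the degree-$11$ polynomial, unique up to a positive multiple, having a double zero at each of the three prescribed inner products lying in the interior of $[-1,1]\setminus T_i$ and a simple zero at $-1$ and at each of the remaining four prescribed inner products; explicitly,
\begin{align*}
g^{(1)}(t)&=c_1\,(t+1)(2t+1)^2(3t+1)(6t+1)\,t^2\,(6t-1)(3t-1)(2t-1)^2,\\
g^{(2)}(t)&=c_2\,(t+1)(2t+1)(3t+1)(6t+1)^2\,t^2\,(6t-1)^2(3t-1)(2t-1),
\end{align*}
with $c_i>0$; in each case the multiplicities sum to $11$. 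A factor-by-factor sign check shows $g^{(i)}(t)\ge 0$ for $t\in[-1,1)\setminus T_i$ (and $g^{(i)}(t)<0$ on $T_i$), so $g^{(i)}(1)>0$ and $f^{(i)}\in\mathcal{L}(48,11,h;T_i)$.

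Inserting $f^{(i)}$ into the inequality \eqref{lower-bound} of Proposition~\ref{prop1} and using $\mathcal{E}^{h}(C)=N-1$, $f^{(i)}_0=1-g^{(i)}_0$ and $f^{(i)}(1)=1-g^{(i)}(1)$, all the common terms cancel and one is left with $g^{(i)}_0\,N\ge g^{(i)}(1)$, i.e.\ $|C|\ge g^{(i)}(1)/g^{(i)}_0$ provided $g^{(i)}_0>0$. So the first half of the theorem reduces to a single moment computation: fix the normalization $c_i$, evaluate $g^{(i)}_0=c_{48}\int_{-1}^{1}g^{(i)}(t)(1-t^2)^{45/2}\,dt$ via \eqref{eq:orthorep}, check that $g^{(i)}_0>0$, and confirm that $g^{(i)}(1)/g^{(i)}_0=52\,416\,000$. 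The delicate point here, and the one I expect to require real care, is the positivity of $g^{(i)}_0$: this is where the smallness of $T_i$ relative to the mass of the weight $(1-t^2)^{45/2}$ near $t=0$ is used, and it looks tightest for $T_1$, whose bad interval begins already at $\pm 1/6$.

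For the equality case, suppose $|C|=52\,416\,000$. Then $f^{(i)}$ and $C$ attain the bound, so by the last sentence of Proposition~\ref{prop1} the set $I(C)$ is contained in the zero set of $f^{(i)}-h=-g^{(i)}$, namely
\[
I(C)\subseteq Z:=\{-1,-1/2,-1/3,-1/6,0,1/6,1/3,1/2\}.
\]
To upgrade this to the full distance distribution I would use the pointwise form of the $11$-design condition. By the addition formula $P_k^{(48)}(x\cdot y)=d_k^{-1}\sum_j Y_{k,j}(x)Y_{k,j}(y)$ (for a positive constant $d_k$ and $\{Y_{k,j}\}_j$ an orthonormal basis of degree-$k$ spherical harmonics) together with $\sum_{y\in C}Y_{k,j}(y)=0$ for $1\le k\le 11$ (equivalent, by Definition~\ref{def-des}, to $C$ being an $11$-design), we get $\sum_{y\in C}P_k^{(48)}(x\cdot y)=0$ for every $x\in C$ and $1\le k\le 11$; with the trivial $k=0$ identity this says that for each $x\in C$ the local distance distribution $(A_t(x))_{t\in Z}$ satisfies
\[
\sum_{t\in Z}A_t(x)\,P_k^{(48)}(t)=\begin{cases}N-1,&k=0,\\ -1,&1\le k\le 11.\end{cases}
\]
Since $P_0^{(48)},\dots,P_7^{(48)}$ form a basis of the polynomials of degree $\le 7$ and the eight nodes of $Z$ are distinct, the $8\times8$ matrix $[P_k^{(48)}(t)]_{0\le k\le 7,\,t\in Z}$ is nonsingular, so $(A_t(x))_{t\in Z}$ is uniquely determined and does not depend on $x$; hence $C$ is distance invariant. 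The four lattice codes are themselves $T_i$-avoiding $11$-designs of cardinality $52\,416\,000$, so they solve the same linear system, and its unique solution must therefore be their common distribution \eqref{eq:distdist}. All entries of \eqref{eq:distdist} are positive, so in fact $I(C)=Z$, whence $s(C)=1/2$; and $A_{-1}(x)=1$ for every $x$ forces $C=-C$. This yields all the assertions of the theorem.
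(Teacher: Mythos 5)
The paper itself does not prove Theorem~\ref{11des-48}; it imports it from \cite{BC2024}, so your proposal has to stand on its own, and in structure it does. Taking $h\equiv 1$ in the design branch of Proposition~\ref{prop1} is exactly the classical Delsarte--Goethals--Seidel mechanism: for any polynomial $g$ of degree at most $11$ with $g\ge 0$ on $[-1,1)\setminus T_i$ and $g_0>0$ one gets $N\ge g(1)/g_0$. Your two polynomials have the correct multiplicity pattern (sign changes only at $-1$ and inside the forbidden intervals), your factor-by-factor sign check is right, and your equality analysis is sound: complementary slackness gives $I(C)\subseteq Z$, the pointwise design identity $\sum_{t\in Z}A_t(x)P_k^{(48)}(t)=-1$ for $1\le k\le 7$ together with the $k=0$ equation and the invertibility of the $8\times 8$ generalized Vandermonde matrix forces a unique local distribution, hence distance invariance, the distribution \eqref{eq:distdist} (identified via the four lattice codes, which satisfy the same system), $s(C)=1/2$, and antipodality from $A_{-1}=1$. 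This is very much in the spirit of what \cite{BC2024} does, and of the framework of Section 2.

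The one genuine gap is that the two facts the entire cardinality bound hinges on --- $g^{(i)}_0>0$ and $g^{(i)}(1)/g^{(i)}_0=52\,416\,000$ --- are left as ``evaluate and confirm,'' and you even anticipate that positivity of $g^{(i)}_0$ will be delicate. It is not, and no integral against $(1-t^2)^{45/2}$ needs to be computed: since $g^{(i)}$ has degree $11$ and vanishes at all nine nodes $-1,\pm 1/2,\pm 1/3,\pm 1/6,0$, the quadrature formula \eqref{quad-11} of Proposition~\ref{prop3} gives $g^{(i)}_0=g^{(i)}(1)/52\,416\,000$, which is positive because $g^{(i)}(1)>0$, and hence $g^{(i)}(1)/g^{(i)}_0$ equals $52\,416\,000$ exactly. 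Using \eqref{quad-11} here is not circular: it rests only on the existence of the four lattice codes and their distance distribution \eqref{eq:distdist}, which is computed directly from those codes (as an $11$-design with $|I(C)|=8$), not on the minimality statement being proved --- the same legitimate use of the known codes that you already make when identifying the solution of the $8\times 8$ system. With that substitution (or, alternatively, an explicit computation of $g^{(i)}_0$ via \eqref{eq:orthorep}) your argument is complete.
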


The existence of spherical 11-designs with $52\,416\,000$ points 
with distance distribution \eqref{eq:distdist} implies
the existence of a quadrature formula (see, e.g., Equation (1.10) in \cite{FL}) as follows. 

\begin{proposition}  \label{prop3}
For every polynomial $f$ of degree at most 11, it follows that
\begin{eqnarray} \label{quad-11}
f_0 &=& \frac{1}{52\,416\,000}\Bigg(36848\left(f\left(-\frac{1}{2}\right)+f\left(\frac{1}{2}\right)\right) + 1678887\left(f\left(-\frac{1}{3}\right)+f\left(\frac{1}{3}\right)\right) \nonumber \\ 
&& \, +12608784\left(f\left(-\frac{1}{6}\right)+f\left(\frac{1}{6}\right)\right)+23766960f(0)+f(-1)+f(1)\Bigg).
\end{eqnarray}
\end{proposition}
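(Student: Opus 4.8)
The plan is to deduce \eqref{quad-11} directly from the polynomial identity \eqref{main-id}, applied to one of the four extremal codes $C \subset \mathbb{S}^{47}$ introduced above. Recall that each such $C$ exists (as the normalized set of minimal vectors of $P_{48p}$, $P_{48q}$, $P_{48m}$, or $P_{48n}$), has $N=52\,416\,000$ points, is a spherical $11$-design, is distance invariant, and has inner-product set $I(C)=\{-1,-\tfrac12,-\tfrac13,-\tfrac16,0,\tfrac16,\tfrac13,\tfrac12\}$ with the frequencies recorded in \eqref{eq:distdist}.

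First I would fix an arbitrary one-variable polynomial $f$ with $\deg f \le 11$ and write down \eqref{main-id} for this $f$ and this code:
\[
 f(1)N+\sum_{x,y\in C,\,x\neq y} f(x\cdot y)=f_0N^2+\sum_{i=1}^{\deg f} f_iM_i(C).
\]
Since $C$ is a spherical $11$-design, Definition \ref{def-des} gives $M_1(C)=\cdots=M_{11}(C)=0$, and because $\deg f\le 11$ the whole sum $\sum_{i=1}^{\deg f} f_iM_i(C)$ vanishes; hence the right-hand side reduces to $f_0N^2$.

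Next I would rewrite the double sum on the left using distance invariance. Grouping over inner products,
\[
 \sum_{x,y\in C,\,x\neq y} f(x\cdot y)=\sum_{x\in C}\sum_{t\in I(C)} A_t(x)\,f(t)=N\sum_{t\in I(C)} A_t\,f(t),
\]
where the last equality uses $A_t(x)=A_t$ for every $x\in C$. Substituting this into the previous identity, dividing by $N$, and inserting the explicit values from \eqref{eq:distdist} (so $A_{-1}=1$, $A_{\pm 1/2}=36\,848$, $A_{\pm 1/3}=1\,678\,887$, $A_{\pm 1/6}=12\,608\,784$, $A_0=23\,766\,960$), together with the isolated $f(1)$ coming from the $f(1)N$ term, produces exactly \eqref{quad-11}.

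There is essentially no genuine obstacle here; the only point that warrants care is the bookkeeping. The $f(1)$ summand in \eqref{quad-11} originates from $f(1)N$ in \eqref{main-id}, while the $f(-1)$ summand comes from the antipodal contribution $A_{-1}=1$ in the distance distribution, and one should note that degree $11$ already suffices for the moment sum to collapse — the stronger $11\tfrac12$-design property of Venkov \cite{Ven} is not needed, since \eqref{main-id} only involves $M_i(C)$ for $i\le\deg f$. Equivalently, \eqref{quad-11} is just the specialization of Equation (1.10) of \cite{FL} to $n=48$, $\tau=11$, and the distribution \eqref{eq:distdist}, and one could simply cite that identity.
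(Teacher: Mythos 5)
Your argument is correct and coincides with the paper's own proof: the paper likewise deduces \eqref{quad-11} by applying \eqref{main-id} to one of the $52\,416\,000$-point 11-designs, using the vanishing of $M_1(C),\dots,M_{11}(C)$ and distance invariance with the distribution \eqref{eq:distdist} (and it also records your alternative, namely citing Equation (1.10) of \cite{FL} directly). No gaps; the bookkeeping of the $f(1)$ and $f(-1)$ terms is exactly as you describe.
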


\begin{proof}
This follows by the existence of spherical 11-designs on $\mathbb{S}^{47}$ of cardinality $52\,416\,000$ and, consequently, with distance 
distribution \eqref{eq:distdist}. We apply the definition of spherical designs via Equation (1.10) in \cite{FL} with $f$ of degree at most 11 and
the point $y$ in that equation belonging to the design. 

Another proof follows by applying the identity \eqref{main-id} for a spherical 11-designs on $\mathbb{S}^{47}$ of cardinality $52\,416\,000$ and using the 
fact that such a design is distance regular (since $|I(C)|=8<11$, the strength
of the design \cite[Theorem 7.4]{DGS}).
\end{proof}

\section{Energy bounds for $T$-avoiding codes on $\mathbb{S}^{47}$}

In this section we assume that the function $h$ has either positive 
twelfth derivative or is absolutely monotone, i.e. $h^{(i)} \geq 0$ for every $i \geq 0$. In fact, only finitely many positive derivatives (up to twelfth) are enough as we apply Propositions \ref{prop1} and \ref{prop2} with polynomials of degree at most 11. 

\subsection{Lower bounds for $T_1$-avoiding sets}\label{SubT_1}

We shall apply (slightly modified, see in the proof of Theorem \ref{ulb-48}) Proposition \ref{prop1} with the polynomial $f$ that interpolates the potential function $h$ as follows:
\[ f(a)=h(a), \ f^\prime(a)=h^\prime(a) \]
for $a=-1$, $\pm 1/2$, and $0$ (four times double interpolation),
\[ f(b)=h(b) \]
for $b=\pm 1/3$ and $\pm 1/6$ (four times single interpolation). 
Then $f$ is an $11$-degree polynomial to be used in the theorem below. Recall that $n=48$, $\tau=11$, $N=52\,416\,000$, and $T_1=(-1/3,-1/6) \cup (1/6,1/3)$.

\begin{thm} \label{ulb-48}
Let $h$ be absolutely monotone with $h^{(12)}>0$ in $(-1,1)$. 
Let $C \subset \mathbb{S}^{47}$ be a $T_1$-avoiding spherical code with $|C|=52\,416\,000 $ which is either antipodal or a 3-design. Then 
\begin{equation} \label{ulb-h-energy-48}
\begin{split}
\mathcal{E}^h(C) &\geq 36848\left(h\left(-\frac{1}{2}\right)+h\left(\frac{1}{2}\right)\right) + 1678887\left(h\left(-\frac{1}{3}\right)+h\left(\frac{1}{3}\right)\right) \\ 
& \ \ \ \, +12608784\left(h\left(-\frac{1}{6}\right)+h\left(\frac{1}{6}\right)\right)+23766960h(0)+h(-1).
\end{split}
\end{equation}
The equality is attained when $C$ is an antipodal spherical 11-design 
that is distance invariant and 
its distance distribution is as given in~\eqref{eq:distdist}. In particular, the four codes formed by the minimum norm vectors in the  
even unimodular extremal lattices $P_{48p}, P_{48q}, P_{48m}$, and $P_{48n}$ in $\mathbb{R}^{48}$, respectively, attain the bound \eqref{ulb-h-energy-48} and hence, are universally optimal among the considered class of codes.
\end{thm}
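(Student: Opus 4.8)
The plan is to apply the Delsarte–Yudin framework of Proposition \ref{prop1} with the interpolating polynomial $f$ of degree $11$ specified just before the statement, but with two modifications of the admissibility conditions tailored to the two sub-classes of codes. For antipodal codes, recall $M_i(C)=0$ for all odd $i$, so condition (A2) need only be checked for \emph{even} $i\ge 2$; for $3$-designs, $M_1(C)=M_2(C)=M_3(C)=0$, so (A2) need only be verified for $i\ge 4$. Thus the first step is to record the identity \eqref{main-id} and observe that in either case the terms $f_iM_i(C)$ that survive are exactly those over the relevant index set, so it suffices to prove $f_i\ge 0$ on that reduced set together with (A1): $f(t)\le h(t)$ for $t\in[-1,1]\setminus T_1$.

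Next I would verify condition (A1). Since $h$ is absolutely monotone with $h^{(12)}>0$, the divided-difference / Hermite interpolation error formula gives, for $t\in[-1,1]$,
\begin{equation*}
h(t)-f(t)=\frac{h^{(12)}(\xi_t)}{12!}\,(t+1)(t+\tfrac12)^2 t^2(t-\tfrac12)^2\,(t+\tfrac13)(t+\tfrac16)(t-\tfrac16)(t-\tfrac13)
\end{equation*}
for some $\xi_t\in(-1,1)$, because $f$ matches $h$ to the prescribed orders at $-1,\pm\tfrac12,0$ (double) and $\pm\tfrac13,\pm\tfrac16$ (simple), a total of $12$ conditions. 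The squared factors and $(t+1)$ are nonnegative on $[-1,1]$; the product $(t+\tfrac13)(t+\tfrac16)(t-\tfrac16)(t-\tfrac13)=(t^2-\tfrac19)(t^2-\tfrac1{36})$ is nonnegative precisely off the open intervals $(\tfrac16,\tfrac13)$ and $(-\tfrac13,-\tfrac16)$, i.e. exactly on $[-1,1]\setminus T_1$. Hence $h(t)-f(t)\ge 0$ there, giving (A1), with equality exactly at the interpolation nodes, which are the points of $I(C)$ in \eqref{eq:distdist}.

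The third and central step is the sign computation of the Gegenbauer coefficients $f_i$ for $i$ in the reduced index sets. Here I would expand $f$ in $\{P_i^{(48)}\}$ via \eqref{eq:orthorep} and check numerically (with interval/rational arithmetic to make it rigorous) that $f_i\ge 0$ for the needed indices — for the antipodal case all even $2\le i\le 11$, hence $i\in\{2,4,6,8,10\}$, and for the $3$-design case $4\le i\le 11$. Crucially, $f$ itself does not depend on $h$ beyond the nodes and interpolation orders (the error term above shows $f$ is the Hermite interpolant at a fixed node set), so the $f_i$ are a finite list of explicit constants; but care is needed because a priori $f$ could depend on the actual values $h(a),h'(a),h(b)$. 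The clean way is to note that by Proposition \ref{prop3} the quadrature formula \eqref{quad-11} is exact for all $\deg\le 11$ polynomials, so for any such polynomial $f$ we have $f_0N = \sum_{t}(\text{weights})f(t)$ with the weights of \eqref{eq:distdist}; since $f$ agrees with $h$ at all nodes, $f_0N - f(1)$ equals exactly the right-hand side of \eqref{ulb-h-energy-48}. This reduces the whole theorem to: (i) (A1), done above, and (ii) the finitely many coefficient-positivity checks.

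I expect step (iii)/(ii) — the nonnegativity of the relevant $f_i$ — to be the main obstacle, since it is the one genuinely computational input and the one place where the choice of $T_1$ (rather than, say, $T_2$) and the split into antipodal-vs-$3$-design classes matters: one anticipates that $f_1$ or $f_3$ (or some odd coefficient) may fail to be nonnegative, which is exactly why the theorem restricts to antipodal codes or to $3$-designs rather than treating all codes of this cardinality. Once the signs are confirmed on the reduced index sets, the lower bound \eqref{ulb-h-energy-48} follows from the modified Proposition \ref{prop1}; equality for the four lattice codes $P_{48p},P_{48q},P_{48m},P_{48n}$ is then immediate because they are antipodal $11$-designs (in particular $3$-designs and antipodal, so in \emph{both} sub-classes) with $I(C)$ equal to the node set and distance distribution \eqref{eq:distdist}, so all complementary-slackness conditions $f_iM_i(C)=0$ and $I(C)\subseteq\{f=h\}$ hold and the bound is met with equality. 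Universal optimality among the stated class is exactly the assertion that this lower bound, valid for every absolutely monotone $h$, is attained by these configurations.
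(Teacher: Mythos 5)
Your setup is right in several respects: the Hermite error-formula argument for (A1), the observation that antipodality or the $3$-design property lets you drop $f_3M_3(C)$ (and odd moments) from \eqref{main-id}, the use of the quadrature formula \eqref{quad-11} to evaluate $f_0N-f(1)$, and even the anticipation that the troublesome coefficient is an odd one (it is $f_3$). But the central step --- positivity of the relevant Gegenbauer coefficients $f_i$ --- is where your plan breaks down. You assert that ``the $f_i$ are a finite list of explicit constants'' that can be certified once by rational/interval arithmetic. They are not: $f$ is the Hermite interpolant of $h$, so its coefficients are linear functionals of the data $h(-1),h'(-1),h(\pm\tfrac12),h'(\pm\tfrac12),h(0),h'(0),h(\pm\tfrac13),h(\pm\tfrac16)$ and change with $h$ (indeed, if $h$ is any polynomial of degree at most $11$ then $f=h$). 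The ``clean way'' you invoke --- exactness of \eqref{quad-11} --- only computes the value $f_0N-f(1)$; it says nothing about the signs of $f_1,\dots,f_{11}$. So as written, the one genuinely non-formal ingredient of the theorem is missing, and no finite numerical check of a fixed polynomial can supply it.

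What is needed (and what the paper does) is a decomposition of $f$ into $h$-dependent nonnegative scalars times \emph{fixed} polynomials: the Newton form
$f(t)=h(t_1)+\sum_{r=1}^{11} h[t_1,\ldots,t_{r+1}]\prod_{j=1}^{r}(t-t_j)$
over the node multiset $\left(-1,-1,-\tfrac12,-\tfrac12,-\tfrac13,-\tfrac16,0,0,\tfrac16,\tfrac13,\tfrac12,\tfrac12\right)$. Absolute monotonicity makes all divided differences nonnegative, and the partial products $PP_r$ for $r\le 8$ are positive definite because all their roots are nonpositive; the only content left is the explicit Gegenbauer expansion of $PP_9$, $PP_{10}$, $PP_{11}$, which shows $PP_9,PP_{10}$ positive definite and $PP_{11}$ with a single negative coefficient $g_{3,11}$. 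This yields $f_i>0$ for all $i\neq 3$ (strictness from $h^{(12)}>0$), which is exactly the $h$-uniform statement your numerical plan cannot reach. Finally, for the equality statement you should not stop at ``the four codes attain the bound'': the converse direction uses $f_i>0$ ($i\neq3$) to force $M_i(C)=0$, hence $C$ is an $11$-design of size $52\,416\,000$, and then Theorem \ref{11des-48} (the $T_1$ part) to pin down distance invariance and the distribution \eqref{eq:distdist}.
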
 

\begin{remark} \label{rem4.2} The proof shows that it suffices to have potentials $h$ with nonnegative (strictly positive) derivatives up to order $12$. Note also that if $h^{(12)}(t_0)=0$ for some $t_0 \in (-1,1)$, then the absolute monotonicity implies that $h^{(12)}(t)\equiv 0$ on $(-1,t_0]$ and that $h$ is a polynomial of degree at most $11$.
\end{remark}

\begin{proof} We first establish that $f(t) \leq h(t)$ for every $t \in [-1,1) \setminus T_1$, i.e.  
condition (A1) in both Definitions \ref{LowerClassCodes} and \ref{LowerClassDesigns} is satisfied. Indeed, the Hermite interpolation error formula implies that
\[ h(t)-f(t) = \frac{h^{(12)}(\xi)}{12!} t^2(t+1)^2
\left(t+\frac{1}{2}\right)^2\left(t-\frac{1}{2}\right)^2
\left(t+\frac{1}{3}\right)\left(t+\frac{1}{6}\right)
\left(t-\frac{1}{6}\right)\left(t-\frac{1}{3}\right) \geq 0, \]
where $\xi \in (-1,1)$, for every $t \in [1,1) \setminus T_1$. 

While the interpolant  $f$ does not necessarily satisfy condition $(A2)$ 
of Definition \ref{LowerClassCodes}, we will prove a modified 
condition which will allow us to conclude \eqref{ulb-h-energy-48} for codes that are antipodal or (at least) 
3-designs. Utilizing Newton 
interpolation formula, we have  (see, for example, \cite{B05}) 
\[ f(t)=h(t_1)+\sum_{r=1}^{11} h[t_1,\ldots,t_{r+1}] \prod_{j=1}^{r} (t-t_j),  \]
where 
\[ (t_1,t_2,t_3,t_4,t_5,t_6,t_7,t_8,t_9,t_{10},t_{11},t_{12})=
\left(-1,-1,-\frac{1}{2},-\frac{1}{2},-\frac{1}{3},-\frac{1}{6},0,0,\frac{1}{6},
\frac{1}{3},\frac{1}{2},\frac{1}{2}\right) \]
is the interpolation nodes multiset and $h[t_1,\ldots,t_i]$ are the corresponding divided differences. 

Since $h$ is absolutely monotone, the divided differences $h[t_1,\ldots,t_{r+1}]$ are
nonnegative (actually, only non-negativity of the first $12$ derivatives of $h$ suffices). It remains to consider the Gegenbauer expansions of the partial products $PP_r:=(t-t_1)\ldots(t-t_r)$ for $r=1,\ldots,11$. Each of the factors 
$t-t_i$ with $t_i \leq 0$ are positive definite, and so are the partial products $PP_i$, $i=1,\ldots,8$. Thus, we need to examine only the partial 
products $PP_9$, $PP_{10}$, and $PP_{11}$. We present the explicit 
Gegenbauer expansions of these. 

The Gegenbauer expansion of
\[ PP_9(t)=\prod_{i=1}^9(t-t_i)=\sum_{i=0}^{9} g_{i,9}P_i^{(48)}(t) \] 
has 
\[ g_{0,9}=\frac{107}{336960}, \  g_{1,9}=\frac{9559}{1684800}, \
 g_{2,9}=\frac{1457}{31104}, \  g_{3,9}=\frac{662371}{2818800}, \
 g_{4,9}=\frac{793877}{1002240}, \] 
 \[ g_{5,9}=\frac{109123049}{58631040}, \
g_{6,9}=\frac{2444141}{808704}, \  g_{7,9}=\frac{1873655}{582552}, \ 
 g_{8,9}=\frac{296429}{150336}, \  g_{9,9}=\frac{296429}{575360}. \]
Further, 
\[ PP_{10}(t)=\prod_{i=1}^{10}(t-t_i)=\sum_{i=0}^{10} g_{i,10}P_i^{(48)}(t) \] 
has 
\[ g_{0,10}=\frac{37}{2995200}, \ g_{1,10}=\frac{337}{1123200}, \ 
 g_{2,10}=\frac{984415}{281428992}, \ g_{3,10}=\frac{1712633}{67651200}, \ 
 g_{4,10}=\frac{96599993}{781747200}, \] 
 \[ g_{5,10}=\frac{584962}{1374165}, \
g_{6,10}=\frac{1598663969}{1504189440}, \ g_{7,10}=\frac{5878243}{3106944}, \ 
 g_{8,10}=\frac{651254513}{288645120}, \]
 \[ g_{9,10}=\frac{889287}{575360}, \ 
 g_{10,10}=\frac{3260719}{7364608}, \]
 and 
 \[ PP_{11}(t)=\prod_{i=1}^{11}(t-t_i)=\sum_{i=0}^{11} g_{i,11}P_i^{(48)}(t) \] 
(this is the polynomial utilized in Theorem 5.1 of \cite{BC2024}) has
\[ g_{0,11}=\frac{1}{13478400}, \ g_{1,11}=\frac{3961}{1758931200}, \ g_{2,11}=\frac{47}{8794656}, \ g_{3,11}=-\frac{118957}{ 811814400}, \]
\[ g_{4,11}=\frac{122059}{1563494400}, \ g_{5,11}=\frac{376856011}{32716120320}, \ g_{6,11}=\frac{231656467}{3008378880}, \ g_{7,11}=\frac{399983395}{1342199808}, \] \[ g_{8,11}=\frac{439011349}{577290240}, \ g_{9,11}=\frac{3260719}{2589120}, \ g_{10,11}=\frac{16303595}{14729216}, \ g_{11,11}=\frac{2075003}{5523456}. \]

Thus, the partial products $PP_9$ and $PP_{10}$ are positive definite. Furthermore, 
$PP_{11}$ has only one negative Gegenbauer coefficient, namely $g_{3,11}$, and all others are (strictly) positive. Therefore, all Gegenbauer coefficients of $f(t)$ are (strictly) positive, with the possible exception of $f_3$, because $h^{(12)}(t)>0$ on $(-1,1)$.

 Note that for all antipodal codes or
spherical 3-designs, we have  $M_3(C)=0$, and thus $f_3M_3(C)=0$ in the identity \eqref{main-id}. Therefore, for such codes
\[  \mathcal{E}^h(C) \geq f_0N-f(1) \]
for our interpolation polynomial $f$. Using \eqref{quad-11} and the interpolation conditions we conclude
\begin{eqnarray*}
    f_0N-f(1) &=& 36848\left(h\left(-\frac{1}{2}\right)+h\left(\frac{1}{2}\right)\right) + 1678887\left(h\left(-\frac{1}{3}\right)+h\left(\frac{1}{3}\right)\right) \\ 
&& \, +12608784\left(h\left(-\frac{1}{6}\right)+h\left(\frac{1}{6}\right)\right)+23766960h(0)+h(-1),
\end{eqnarray*}
which proves \eqref{ulb-h-energy-48}. If equality holds for some potential $h$, as  $f_i>0$, $i=1,\dots,11, i\not= 3$, we shall have $M_i(C)=0$, $i=1,\dots,11, i\not= 3$, which along with $M_3(C)=0$ implies that $C$ is an 11-design. Since $|C|=52\,416\,000 $, we complete the proof of the theorem by using the part of Theorem \ref{11des-48} about $T_1$ (i.e., \cite[Theorem 6.1]{BC2024}).
\end{proof}

\begin{remark}
We note that we actually prove the universal optimality of the sets of minimal vectors of the even unimodular extremal lattices 
$P_{48p}, P_{48q}, P_{48m}$, and $P_{48n}$ in $\mathbb{R}^{48}$ in the class of $T_1$-avoiding codes with $M_3(C)=0$. 
\end{remark}

\subsection{Lower bounds for $T_2$-avoiding sets}

In this case we modify $f$ to be the polynomial that interpolates the potential function $h$ as follows:
\[ f(a)=h(a), \ f^\prime(a)=h^\prime(a) \]
for $a=-1$, $\pm 1/6$, and $0$ (four times double interpolation),
\[ f(b)=h(b) \]
for $b=\pm 1/2$ and $\pm 1/3$ (four times single interpolation). 
It is still uniquely determined $11$-degree polynomial. We still have $n=48$, $\tau=11$, $N=52\,416\,000$, but our avoiding set is $T_2=(-1/2,-1/3) \cup (1/3,1/2)$.

\begin{thm} \label{ulb-48-2}
Let $h$ be absolutely monotone with $h^{(12)}>0$ in $(-1,1)$. 
Let $C \subset \mathbb{S}^{47}$ be any $T_2$-avoiding spherical code with $|C|=52\,416\,000 $. Then 
\begin{equation} \label{ulb-h-energy-48_2}
\begin{split}
\mathcal{E}^h(C) &\geq 36848\left(h\left(-\frac{1}{2}\right)+h\left(\frac{1}{2}\right)\right) + 1678887\left(h\left(-\frac{1}{3}\right)+h\left(\frac{1}{3}\right)\right) \\ 
& \ \ \ \, +12608784\left(h\left(-\frac{1}{6}\right)+h\left(\frac{1}{6}\right)\right)+23766960h(0)+h(-1).
\end{split}
\end{equation}
The equality is attained when $C$ is an antipodal spherical 11-design, which is distance invariant and 
its distance distribution is as given in~\eqref{eq:distdist}. In particular, the four codes formed by the minimum norm vectors in the  
even unimodular extremal lattices $P_{48p}, P_{48q}, P_{48m}$, and $P_{48n}$ in $\mathbb{R}^{48}$, respectively, attain the bound \eqref{ulb-h-energy-48_2} and hence, are universally optimal among any $T_2$ -avoiding codes.
\end{thm}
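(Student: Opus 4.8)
The plan is to mirror the proof of Theorem \ref{ulb-48} with the roles of the double-interpolation nodes $\pm 1/2$ and $\pm 1/6$ interchanged, exploiting that the avoiding set is now $T_2=(-1/2,-1/3)\cup(1/3,1/2)$ rather than $T_1$. First I would verify condition (A1): the Hermite interpolation error formula gives
\[ h(t)-f(t) = \frac{h^{(12)}(\xi)}{12!}\, t^2(t+1)^2\left(t+\frac{1}{6}\right)^2\left(t-\frac{1}{6}\right)^2\left(t+\frac{1}{2}\right)\left(t+\frac{1}{3}\right)\left(t-\frac{1}{3}\right)\left(t-\frac{1}{2}\right), \]
for some $\xi\in(-1,1)$, and one checks that the product of the two single-interpolation pairs $\left(t+\frac12\right)\left(t-\frac12\right)=t^2-\frac14$ and $\left(t+\frac13\right)\left(t-\frac13\right)=t^2-\frac19$ is nonnegative on $[-1,1]\setminus T_2$ (indeed, on that set both factors are nonnegative, or both are $\le 0$ only when $t^2\in[1/9,1/4]$, which is exactly $T_2\cup\{\pm1/3,\pm1/2\}$; on $t^2\le 1/9$ both are $\le 0$, making the product $\ge 0$). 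So $h(t)-f(t)\ge 0$ on $[-1,1]\setminus T_2$, since $h^{(12)}>0$ and the remaining factors are squares.

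Next I would establish the moment condition. Writing $f$ via the Newton formula with interpolation-node multiset
\[ \left(-1,-1,-\frac12,-\frac13,-\frac16,-\frac16,0,0,\frac16,\frac16,\frac13,\frac12\right), \]
reordered so that the nonpositive nodes come first, the divided differences are nonnegative by absolute monotonicity of $h$ (only the first twelve derivatives are needed, as in Remark \ref{rem4.2}). All partial products $PP_r=(t-t_1)\cdots(t-t_r)$ built only from nonpositive nodes are positive definite, so the only partial products requiring attention are the last few, which contain the positive nodes $1/6,1/6,1/3,1/2$. The key step is to compute the explicit Gegenbauer expansions (in $P_i^{(48)}$) of these final partial products and check that each has all nonnegative coefficients, or — as happened for $PP_{11}$ in Theorem \ref{ulb-48} — at most one negative coefficient at an index $i$ for which $M_i(C)=0$ is guaranteed for the codes under consideration. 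Since here the codes need not be antipodal nor $3$-designs, we would need every relevant $f_i$ with $i\ge 1$ to be nonnegative (i.e.\ condition (A2) of Definition \ref{LowerClassCodes} must hold outright), so the required outcome is that all these partial products are genuinely positive definite; this is precisely what makes the $T_2$ statement strong enough to drop the antipodality/design hypothesis.

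With (A1) and (A2) in hand, $f\in\mathcal{L}(48,h;T_2)$, so Proposition \ref{prop1} gives $\mathcal{E}^h(C)\ge f_0N-f(1)$, and evaluating the right-hand side via the quadrature formula \eqref{quad-11} and the interpolation conditions $f(a)=h(a)$ at $a\in\{-1,\pm1/2,\pm1/3,\pm1/6,0\}$ yields exactly \eqref{ulb-h-energy-48_2}. For the equality case: if equality holds, then $f_iM_i(C)=0$ for all $i\ge1$; since $f_i>0$ for $i=1,\dots,11$ (using $h^{(12)}>0$ on $(-1,1)$), we get $M_1(C)=\cdots=M_{11}(C)=0$, so $C$ is a spherical $11$-design. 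As $|C|=52\,416\,000$, the $T_2$ part of Theorem \ref{11des-48} (i.e.\ \cite[Theorem 6.2]{BC2024}) forces $C$ to be antipodal, distance invariant, with distance distribution \eqref{eq:distdist}; the four extremal-lattice codes have exactly this distance distribution and hence attain the bound, proving their universal optimality among all $T_2$-avoiding codes of that cardinality. I expect the main obstacle to be the explicit Gegenbauer-expansion computation of the final partial products: unlike the $T_1$ case, we cannot afford even a single negative coefficient, so if some $PP_r$ turned out not to be positive definite, one would have to either regroup the nodes differently or argue that the offending index is killed by $M_i(C)=0$ for structural reasons — which is unavailable here in general. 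I would carry out that computation first to confirm the statement goes through as written.
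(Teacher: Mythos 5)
Your proposal follows essentially the same route as the paper's proof: the same Hermite error factorization with double nodes at $-1,\pm 1/6,0$ and simple nodes at $\pm 1/3,\pm 1/2$, the same sign analysis on $[-1,1]\setminus T_2$, the same Newton expansion over the ordered node multiset with nonnegative divided differences, the same reduction to the positive definiteness of $PP_9$, $PP_{10}$, $PP_{11}$, and the same use of the quadrature formula \eqref{quad-11} and Theorem \ref{11des-48} for the value of the bound and the equality case. The one step you left unexecuted — the explicit Gegenbauer expansions of $PP_9$, $PP_{10}$, $PP_{11}$ for this node ordering — is exactly what the paper computes, and all coefficients indeed come out strictly positive, so condition (A2) holds outright and the argument closes precisely as you anticipated, with no need for the antipodal/3-design escape hatch used in the $T_1$ case.
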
 

\begin{remark} As in the previous subsection it suffices to have potentials $h$ with nonnegative (strictly positive) derivatives up to order $12$. 
\end{remark}

\begin{proof} That $f(t) \leq h(t)$ for every $t \in [1,1) \setminus T_2$ is derived similarly. The Hermite interpolation error formula in this case yields that for any such $t$ we have with some $\xi \in (-1,1)$
\[ h(t)-f(t) = \frac{h^{(12)}(\xi)}{12!} t^2(t+1)^2
\left(t+\frac{1}{6}\right)^2\left(t-\frac{1}{6}\right)^2
\left(t+\frac{1}{2}\right)\left(t+\frac{1}{3}\right)
\left(t-\frac{1}{3}\right)\left(t-\frac{1}{2}\right) \geq 0. \]

The Newton 
interpolation formula for the multi-set
\[ (t_1,t_2,t_3,t_4,t_5,t_6,t_7,t_8,t_9,t_{10},t_{11},t_{12})=
\left(-1,-1,-\frac{1}{2},-\frac{1}{3},-\frac{1}{6},-\frac{1}{6},0,0,\frac{1}{6},
\frac{1}{6},\frac{1}{3},\frac{1}{2}\right) \]
yields
\[ f(t)=h(t_1)+\sum_{r=1}^{11} h[t_1,\ldots,t_{r+1}] \prod_{j=1}^{r} (t-t_j),  \]
where $h[t_1,\ldots,t_i]$ are the relevant divided differences, which similarly are nonnegative by the absolute monotonicity of $h$.

The partial products $PP_r:=(t-t_1)\ldots(t-t_r)$ for $r=1,\ldots,11$ are all positive definite as we establish below (as in subsection \ref{SubT_1} we only need to consider $r=9,10,11$).

The Gegenbauer expansion of
\[ PP_9(t)=\prod_{i=1}^9(t-t_i)=\sum_{i=0}^{9} g_{i,9}P_i^{(48)}(t) \] 
has 
\[ g_{0,9}=\frac{7903}{40435200}, \  g_{1,9}=\frac{371}{105300}, \
 g_{2,9}=\frac{47705}{1617408}, \  g_{3,9}=\frac{15341599}{101476800}, \
 g_{4,9}=\frac{51317749}{97718400}, \] 
 \[ g_{5,9}=\frac{677167211}{527679360}, \
g_{6,9}=\frac{743869}{336960}, \  g_{7,9}=\frac{12041729}{4660416}, \ 
 g_{8,9}=\frac{296429}{167040}, \  g_{9,9}=\frac{296429}{575360}. \]
Similarly, we compute 
\[ PP_{10}(t)=\prod_{i=1}^{10}(t-t_i)=\sum_{i=0}^{10} g_{i,10}P_i^{(48)}(t) \] 
with
\[ g_{0,10}=\frac{1981}{48522240}, \ g_{1,10}=\frac{3983}{5054400}, \ 
 g_{2,10}=\frac{680701}{93809664}, \ g_{3,10}=\frac{25583369}{608860800}, \ 
 g_{4,10}=\frac{79510417}{469048320}, \] 
 \[ g_{5,10}=\frac{1585405927}{3166076160}, \
g_{6,10}=\frac{331592191}{300837888}, \ g_{7,10}=\frac{49704991}{27962496}, \ 
 g_{8,10}=\frac{118868029}{57729024}, \]
 \[ g_{9,10}=\frac{5039293}{3452160}, \ 
 g_{10,10}=\frac{3260719}{7364608}. \]
 Finally,  
 \[ PP_{11}(t)=\prod_{i=1}^{11}(t-t_i)=\sum_{i=0}^{11} g_{i,11}P_i^{(48)}(t) \] 
where
\[ g_{0,11}=\frac{511}{181958400}, \ g_{1,11}=\frac{40103}{586310400}, \ g_{2,11}=\frac{328013}{422143488}, \ g_{3,11}=\frac{40304803}{7306329600}, \]
\[ g_{4,11}=\frac{391174091}{14071449600}, \ g_{5,11}=\frac{30641555483}{294445082880}, \ g_{6,11}=\frac{32981921}{111421440}, \ g_{7,11}=\frac{98632555}{149133312}, \] \[ g_{8,11}=\frac{209575303}{192430080}, \ g_{9,11}=\frac{296429}{215760}, \ g_{10,11}=\frac{16303595}{14729216}, \ g_{11,11}=\frac{2075003}{5523456}. \]
Hence, $f(t)$ is positive definite and since $h^{(12)}(t)>0$ on $(-1,1)$ 
we obtain that $f_i>0$, $i\in \{ 1,\dots,11\}$. We now conclude that for $T_2$-avoiding codes
\[  \mathcal{E}^h(C) \geq f_0N-f(1). \]
From \eqref{quad-11} and the interpolation conditions we derive
\begin{eqnarray*}
    f_0N-f(1) &=& 36848\left(h\left(-\frac{1}{2}\right)+h\left(\frac{1}{2}\right)\right) + 1678887\left(h\left(-\frac{1}{3}\right)+h\left(\frac{1}{3}\right)\right) \\ 
&& \, +12608784\left(h\left(-\frac{1}{6}\right)+h\left(\frac{1}{6}\right)\right)+23766960h(0)+h(-1),
\end{eqnarray*}
which proves \eqref{ulb-h-energy-48_2}. Should equality hold for some $C$ and any potential $h$ with $h^{(12)}(t)>0$, $t\in (-1,1)$, the strict positivity of $f_i$, $i=1,\dots,11$, implies $M_i(C)=0$, which shows $C$ is an 11-design. From
the part of Theorem \ref{11des-48} about $T_2$ (i.e., \cite[Theorem 6.2]{BC2024})
and $|C|=52\,416\,000 $ we conclude that $C$ is distance invariant and 
its distance distribution is as given in~\eqref{eq:distdist}.
\end{proof}

\subsection{Upper bounds for $T_1$-avoiding sets}

It follows from Theorem \ref{11des-48} that the four codes under consideration 
will have maximal $h$-energy among all $T_i$-avoiding, $i=1,2$, 
11-designs on $\mathbb{S}^{47}$. We add to the picture by
constructing a upper LP bound for Proposition   
\ref{prop2} with $T=T_1$, which is valid for all absolutely monotone $h$. 

We apply Proposition \ref{prop2} with the polynomial $g$ that interpolates the potential function $h$ as follows:
\[ g(a)=h(a), \ g^\prime(a)=h^\prime(a) \]
for $a= \pm 1/2$ and $0$ (three times double interpolation),
\[ g(b)=h(b) \]
for $b=\pm 1$, $\pm 1/3$, and $\pm 1/6$ (six times single interpolation). 
Then $g$ is an $11$-degree polynomial,

\begin{thm} \label{T2-des}
Let $h$ have nonnegative twelfth derivative and be finite at $1$. 
The four codes formed by the minimum norm vectors in the  
even unimodular extremal lattices $P_{48p}, P_{48q}, P_{48m}$, and $P_{48n}$ in $\mathbb{R}^{48}$, respectively, are universally optimal (with respect to
the maximum $h$-energy) in the class of $T_1$-avoiding spherical 11-designs 
on $\mathbb{S}^{47}$.
\end{thm}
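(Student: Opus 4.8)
The plan is to mirror the structure of the proof of Theorem~\ref{ulb-48}, but now in the \emph{upper} direction, i.e. verifying conditions (B1) and (B2) of Definition~\ref{UpperClassCodes} (or rather (B2)$^\prime$ of Definition~\ref{UpperClassDesigns} for designs) for the interpolating polynomial $g$ described above. First I would establish (B1): since $g$ interpolates $h$ with double nodes at $\pm 1/2,0$ and single nodes at $\pm 1,\pm 1/3,\pm 1/6$, the Hermite remainder formula gives
\[
h(t)-g(t)=\frac{h^{(12)}(\xi)}{12!}\,t^2\Bigl(t-\tfrac12\Bigr)^2\Bigl(t+\tfrac12\Bigr)^2(t-1)(t+1)\Bigl(t-\tfrac13\Bigr)\Bigl(t+\tfrac13\Bigr)\Bigl(t-\tfrac16\Bigr)\Bigl(t+\tfrac16\Bigr)
\]
for some $\xi\in(-1,1)$. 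On $[-1,1]\setminus T_1$ the squared factors are $\ge 0$, the factor $(t-1)(t+1)=t^2-1\le 0$, and the product $(t^2-\tfrac19)(t^2-\tfrac1{36})$ is $\le 0$ precisely when $t^2\in(\tfrac1{36},\tfrac19)$, i.e. $|t|\in(\tfrac16,\tfrac13)$ — which is exactly the set $T_1$ we have removed. Hence on $[-1,1]\setminus T_1$ that product is $\ge 0$, so overall $h(t)-g(t)\le 0$, giving (B1). (At $t=\pm 1$ equality holds by interpolation; one should note $g$ is finite at $1$ since it is a polynomial, matching the hypothesis.)

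Next I would verify (B2)$^\prime$, which is just $\deg g\le 11$ — automatic, since $g$ is determined by $6+6=12$ interpolation conditions. With (B1) and (B2)$^\prime$ in hand, Proposition~\ref{prop2} (the design version, inequality~\eqref{DY_UB_des}) yields $\mathcal{E}^h(C)\le g_0 N - g(1)$ for every $T_1$-avoiding spherical $11$-design $C$ on $\mathbb{S}^{47}$. Then I would evaluate $g_0 N - g(1)$ using the quadrature formula~\eqref{quad-11} from Proposition~\ref{prop3}: since $\deg g\le 11$, \eqref{quad-11} applies verbatim to $g$, and because $g$ agrees with $h$ at all of $\pm 1,\pm 1/2,\pm 1/3,\pm 1/6,0$, the right-hand side of \eqref{quad-11} with $f$ replaced by $g$ becomes
\[
g_0 = \frac{1}{52\,416\,000}\Bigl(36848\bigl(h(-\tfrac12)+h(\tfrac12)\bigr)+1678887\bigl(h(-\tfrac13)+h(\tfrac13)\bigr)+12608784\bigl(h(-\tfrac16)+h(\tfrac16)\bigr)+23766960\,h(0)+h(-1)+h(1)\Bigr).
\]
Multiplying by $N=52\,416\,000$ and subtracting $g(1)=h(1)$ cancels the $h(1)$ term and leaves exactly the common $h$-energy $\mathcal{E}^h(C_0)$ of the four extremal codes (cf.\ the displayed formula for $\mathcal{E}^h(C)$ in Section~\ref{S3}). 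Thus the upper LP bound coincides with the energy of the four target codes, and since those codes are themselves $T_1$-avoiding spherical $11$-designs with $52\,416\,000$ points, they attain it; this establishes their universal optimality (maximal $h$-energy) in the stated class, for every $h$ with $h^{(12)}\ge 0$ that is finite at $1$ — in particular for every absolutely monotone $h$.

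The main obstacle is purely computational: one must actually exhibit the polynomial $g$ (or at least verify the sign pattern of the Hermite kernel) and confirm that the value $g_0N-g(1)$ reduces to the claimed energy via \eqref{quad-11}. Unlike the lower-bound theorems, \emph{no} positive-definiteness / Gegenbauer-coefficient analysis of $g$ is needed here, because condition (B2)$^\prime$ only constrains the degree, not the signs of the Gegenbauer coefficients; this is what makes the upper-bound argument clean and valid for \emph{all} $T_1$-avoiding $11$-designs (no antipodality or $3$-design hypothesis is required beyond being an $11$-design). The only subtlety worth double-checking is that the sign analysis of the Hermite kernel genuinely uses the removal of $T_1$ — the factor $(t^2-\tfrac19)(t^2-\tfrac1{36})$ changes sign exactly across $|t|\in(\tfrac16,\tfrac13)$, so the argument would fail on $[-1,1]$ but succeeds on $[-1,1]\setminus T_1$, which is precisely why $T_1$ (rather than $T_2$) is the right avoiding set for this interpolation pattern.
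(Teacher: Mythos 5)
Your proposal is correct and follows essentially the same route as the paper: Hermite interpolation of $h$ with double nodes at $\pm 1/2,0$ and single nodes at $\pm 1,\pm 1/3,\pm 1/6$, the sign analysis of the remainder kernel on $[-1,1]\setminus T_1$ to get (B1), the degree condition (B2)$^\prime$, Proposition~\ref{prop2} in its design form, and evaluation of $g_0N-g(1)$ via the quadrature formula \eqref{quad-11} using the interpolation equalities. Your added remarks (that no Gegenbauer positivity is needed and that the kernel's sign change occurs exactly on $T_1$) are accurate observations consistent with the paper's argument.
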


\begin{proof}
The error formula now gives
\[ h(t)-g(t) = \frac{h^{(12)}(\xi)}{12!} t^2(t+1)(t-1)
\left(t+\frac{1}{2}\right)^2\left(t-\frac{1}{2}\right)^2
\left(t+\frac{1}{3}\right)\left(t+\frac{1}{6}\right)
\left(t-\frac{1}{3}\right)\left(t-\frac{1}{6}\right) \leq 0 \]
for every $t \in [1,1) \setminus T$, where $\xi \in (-1,1)$. Therefore the condition (B1) of Definition \ref{UpperClassDesigns} is satisfied and 
$f \in \mathcal{U}(48,11,h;T)$. We conclude that
\[ \mathcal{Q}_{48,h}(11,N,T) \leq g_0N-g(1). \]
The bound $g_0N-g(1)$ is computed as in Theorem \ref{ulb-48}
via the quadrature formula \eqref{quad-11}. Taking into account the 
interpolation equalities $g(t)=h(t)$ for each $t \in \{-1,\pm 1/2, \pm 1/3, \pm 1/6, 0\}$ we complete the proof. 
\end{proof}

\textbf{Acknowledgements.} The research of the first author is supported by Bulgarian NSF grant KP-06-N72/6-2023. The research of the second author was supported, in part, by the Lilly Endowment and by the Bulgarian Ministry of Education and Science, Scientific Programme "Enhancing the
Research Capacity in Mathematical Sciences (PIKOM)", No. DO1-241/15.08.2023.

\end{document}